\newcommand{\rd}{\,\mathrm{d}}
\def\mQ{\mathcal{Q}}
\def\mT{\mathcal{T}}
\def\mS{\mathcal{S}}
\def\mG{\mathcal{G}}
\def\mM{\mathcal{M}}
\numberwithin{equation}{section}
\newtheorem{theorem}{Theorem}[section]
\newtheorem{proposition}[theorem]{Proposition}
\newtheorem{remark}[theorem]{Remark}
\begin{document}

	\title{A second-order asymptotic-preserving and positivity-preserving exponential Runge-Kutta method for a class of stiff kinetic equations\footnote{This research was supported by NSF grant DMS-1620250 and NSF CAREER grant DMS-1654152. Support from DMS-1107291: RNMS KI-Net is also gratefully acknowledged.}}
	
	\author{Jingwei Hu\footnote{Department of Mathematics, Purdue University, West Lafayette, IN 47907, USA (jingweihu@purdue.edu).} \  \  
	    and \ Ruiwen Shu\footnote{Department of Mathematics, University of Maryland, College Park, MD 20742, USA (rshu@cscamm.umd.edu).}}    
	\maketitle

\begin{abstract}
We introduce a second-order time discretization method for stiff kinetic equations. The method is asymptotic-preserving (AP) -- can capture the Euler limit without numerically resolving the small Knudsen number; and positivity-preserving -- can preserve the non-negativity of the solution which is a probability density function for arbitrary Knudsen numbers. The method is based on a new formulation of the exponential Runge-Kutta method and can be applied to a large class of stiff kinetic equations including the BGK equation (relaxation type), the Fokker-Planck equation (diffusion type), and even the full Boltzmann equation (nonlinear integral type). Furthermore, we show that when coupled with suitable spatial discretizations the fully discrete scheme satisfies an entropy-decay property. Various numerical results are provided to demonstrate the theoretical properties of the method.
\end{abstract}

{\small 
{\bf Key words.}  stiff kinetic equation, exponential Runge-Kutta method, asymptotic-preserving, positivity-preserving, entropy-decay

{\bf AMS subject classifications.} 82C40, 65L04, 35Q31, 35Q84, 35Q20, 65L06, 65F60
}


\section{Introduction}
\label{sec:intro}
	
Kinetic equations describe the non-equilibrium dynamics of a gas or system comprised of a large number of particles. In multiscale modeling hierarchy, they serve as a bridge that connects microscopic Newtonian mechanics and macroscopic continuum mechanics. In this paper, we are concerned with the following class of kinetic equations:
\begin{equation} \label{KE}
\partial_t f+v \cdot \nabla_x f= \frac{1}{\varepsilon}\mathcal{Q}(f), \quad t\geq 0, \quad x\in \Omega \subset \mathbb{R}^{d}, \quad v\in \mathbb{R}^{d},
\end{equation}	
where $f=f(t,x,v)$ is the one-particle probability density function (PDF) of time $t$, position $x$, and particle velocity $v$. $\mathcal{Q}$ is the collision operator which acts only in the velocity space and models the interactions between particles. Examples of $\mathcal{Q}$ include: the Boltzmann collision operator (a nonlinear integral operator) \cite{Cercignani}, the BGK operator (a relaxation type operator) \cite{BGK54}, the kinetic Fokker-Planck operator (a diffusion type operator) \cite{Villani02}, among others. Finally, $\varepsilon$ is the Knudsen number defined as the ratio of the mean free path and typical length scale. The magnitude of $\varepsilon$ indicates the degree of rarefaction of the system. When $\varepsilon$ is small, collisions happen very frequently so that the system is close to the fluid regime. In fact, one can derive the compressible Euler equations from (\ref{KE}) as the leading-order asymptotics by sending $\varepsilon \rightarrow 0$.

When $\varepsilon$ is small, numerically solving the equation (\ref{KE}) is challenging due to the stiff collision term on the right hand side. Any explicit time discretization would suffer from severe stability constraint (time step $\Delta t$ has to be $O(\varepsilon)$). As such, schemes that can remove this constraint are highly desirable. The so-called asymptotic-preserving (AP) scheme \cite{Jin99} is exactly designed for this kind of problems: it solves the kinetic equation without resolving small scales ($\Delta t$ can be chosen independent of $\varepsilon$), yet when $\varepsilon \rightarrow 0$ while keeping $\Delta t$ fixed, it automatically becomes a macroscopic fluid solver, i.e., a consistent discretization to the limiting Euler equations (see \cite{Jin_Rev, HJL17} for a comprehensive review of AP schemes).

The AP property is certainly a desired feature for handling multiscale kinetic equations, especially in the near fluid regime. However, most of AP schemes require some implicit treatment or reformulation of the equation such that the positivity of the solution is lost during the construction. This is unphysical since $f$ is a PDF, and sometimes even causes the simulation to break down. The design of high order (at least second order) schemes that are both AP and positivity-preserving turns out to be highly nontrivial and needs to be handled in a problem-dependent basis. Recently, we developed a family of second-order AP and positivity-preserving schemes for the stiff BGK equation \cite{HSZ18}. The method is based on the implicit-explicit (IMEX) Runge-Kutta framework plus a key correction step utilizing the special structure of the BGK operator. It also works for some hyperbolic systems but is limited to relaxation type operators. 

In this paper, we propose a more general time discretization method based on a new exponential Runge-Kutta formulation that can be applied to a large class of stiff kinetic equations including the BGK, the Fokker-Planck, and even the full Boltzmann equations. To summarize, our method possesses the following features:
\begin{itemize}
\item The scheme is second-order accurate in the kinetic regime $\varepsilon=O(1)$;
\item The scheme is AP: for fixed $\Delta t$, when $\varepsilon \rightarrow 0$, it reduces to a second-order scheme for the limiting Euler equations (in fact, the limiting scheme can be made as the optimal second-order strong-stability-preserving (SSP) Runge-Kutta method, i.e., the improved Euler or Heun's method \cite{GST01, GKS11});
\item The scheme is positivity-preserving for any $\varepsilon\geq 0$: if $f^n\geq 0$, then $f^{n+1}\geq 0$;
\item The time step of the scheme is only constrained by the transport part and can be chosen the same as in the forward Euler method;
\item The scheme satisfies an entropy-decay property when coupled with suitable spatial discretizations.
\end{itemize}

The rest of this paper is organized as follows. In Section~\ref{sec:new}, we construct the method for the general kinetic equation (\ref{KE}) without specifying the collision operator. The emphasis is to make the method second order and positivity-preserving. In Section~\ref{sec:AP}, we consider the application of the method to specific kinetic equations and discuss its AP property. A comparison with existing similar methods is given as well. In Section~\ref{sec:homo}, we address the issue of solving the homogeneous equation ((\ref{KE}) without transport term) which is an important building block of the proposed method. In Section~\ref{sec:entropy}, we prove the entropy-decay property of the method when coupled with suitable spatial discretizations. Some remarks regarding the spatial and velocity domain discretizations are given in Section~\ref{sec:spatial}. Numerical examples are presented in Section~\ref{sec:num}. The paper is concluded in Section~\ref{sec:con}.
	
	
\section{A new exponential Runge-Kutta method for general stiff kinetic equations}
\label{sec:new}

We now present the procedure to construct the new exponential Runge-Kutta method. Since the method is quite general and can be applied to a large class of kinetic equations, we will start with the equation (\ref{KE}) without specifying the collision operator and derive a scheme that is both second-order accurate and positivity-preserving. Then in Section~\ref{sec:AP}, we will consider specific collision operators and discuss the AP property of the scheme as this latter part is problem dependent.

To begin with, let us introduce the following notation: for the autonomous ODE
\begin{equation}
\frac{\rd{}}{\rd{t}}f = A(f),\quad f|_{t=t_0} = g,
\end{equation}
where $A$ is an operator, either linear or nonlinear, we use $\exp(sA)g, \ s\ge 0$ to represent its solution at $t=t_0+s$.\footnote{Note that $\exp(sA)$ is merely a symbol to denote the map from the solution at time $t_0$ to the solution at time $t_0+s$, and should not be understood as the matrix exponential except the linear case.}

We now consider an ODE resulting from the semi-discretization of the equation (\ref{KE}) (only space $x$ is discretized while time $t$ and velocity $v$ are left continuous):
\begin{equation}\label{ode}
\frac{\rd{}}{\rd{t}}f = \mT (f)+\frac{1}{\varepsilon}\mQ (f).
\end{equation}
Here $\mT(f)$ is a discretized operator for the transport term $-v\cdot\nabla_x f$ and $\mQ(f)$ is the collision operator which may take various forms depending on the application. We assume the operators $\mT(f)$ and $\mQ(f)$ are positivity-preserving. To be precise,
\begin{itemize}
\item for $\mT(f)$, we assume
\begin{equation} \label{cflcond}
f \ge 0\, \Longrightarrow \, f+a \Delta t\,\mT (f) \ge 0,\quad \forall \ \text{constant} \ a \ \ \text{s.t.} \ 0\leq a\Delta t\leq \Delta t_{\text{FE}},
\end{equation} 
where $\Delta t_{\text{FE}}$ is the maximum time step allowance such that the forward Euler method is positivity-preserving;
\item for $\mQ(f)$, we assume the solution to the homogeneous equation
\begin{equation}\label{homo_ode}
\frac{\rd{}}{\rd{t}}f = \mQ (f)
\end{equation}
satisfies $f\ge 0$ for all $t\ge t_0$, if the initial data $f|_{t=t_0}=g\ge 0$. In other words, 
\begin{equation} \label{ppcond}
g\ge 0 \, \Longrightarrow \, \exp(s\mQ)g\ge 0, \quad \forall \ \text{constant} \  s\ge 0.
\end{equation}
\end{itemize}

\begin{remark}
The condition (\ref{cflcond}) can be easily satisfied if a positivity-preserving spatial discretization is used, as was done in \cite{HSZ18}. The condition (\ref{ppcond}) is a theoretical property that holds for any kinetic equations.
\end{remark}

We are ready to construct the numerical method for equation (\ref{ode}). For the time being, we assume that the solution to the homogeneous equation (\ref{homo_ode}) can be found exactly and will get back to this in Section~\ref{sec:homo} when discussing specific models. We propose an exponential Runge-Kutta scheme of the following form:
\begin{equation}\label{scheme}
\begin{split}
& f^{(0)} = \exp\left(a_0 \Delta t \frac{1}{\varepsilon}\mQ\right)f^n, \\
& f^{(1)} = \exp\left(a_1 \Delta t \frac{1}{\varepsilon}\mQ\right)\left(f^{(0)} + b_1 \Delta t \mT (f^{(0)})\right), \\
& f^{(2)} = f^{(1)} + b_2 \Delta t \mT (f^{(1)}), \\
& f^{n+1} = \exp\left(a_2 \Delta t \frac{1}{\varepsilon}\mQ\right)\left[ w f^{(2)} + (1-w) \exp\left((1-a_2)\Delta t \frac{1}{\varepsilon}\mQ\right)f^n\right],
\end{split}
\end{equation}
where the constants $a_0$, $a_1$, $a_2$, $b_1$, $b_2$, and $w$ are to be determined. 

With the previous assumptions on $\mT$ and $\mQ$, it is easy to see 
\begin{proposition} \label{prop_pos}
The scheme (\ref{scheme}) is positivity-preserving, i.e., if $f^n\geq 0$, then $f^{n+1}\geq 0$ provided
\begin{equation} \label{pos_cond}
a_0,a_1,b_1,b_2\ge0, \quad 0\le a_2,w\le 1,
\end{equation}
under the CFL condition 
\begin{equation} \label{CFL}
\Delta t \leq \frac{\Delta t_{\text{FE}}}{\max(b_1,b_2)},
\end{equation}
and the ratio is understood as infinite if the denominator is zero.
\end{proposition}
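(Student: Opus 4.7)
The plan is to verify non-negativity stage by stage in (\ref{scheme}), alternating between the hypothesis (\ref{ppcond}) on the exponential $\mQ$-flow and the forward-Euler positivity assumption (\ref{cflcond}) on the transport operator $\mT$. Every exponent and every coefficient multiplying $\mT$ in the scheme is assumed to be non-negative, and the CFL bound (\ref{CFL}) is tailored so that both $b_1\Delta t$ and $b_2\Delta t$ lie in the positivity window $[0,\Delta t_{\text{FE}}]$; no further ingredients should be necessary.

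Starting from $f^n\ge 0$, I would first apply (\ref{ppcond}) with $s=a_0\Delta t/\varepsilon\ge 0$ to conclude $f^{(0)}\ge 0$. Next, (\ref{CFL}) together with $b_1\ge 0$ gives $0\le b_1\Delta t\le\Delta t_{\text{FE}}$, so (\ref{cflcond}) (with $a=b_1$) yields $f^{(0)}+b_1\Delta t\,\mT(f^{(0)})\ge 0$; a second application of (\ref{ppcond}) with $s=a_1\Delta t/\varepsilon$ then gives $f^{(1)}\ge 0$. A third invocation of (\ref{cflcond}), now with $a=b_2$, produces $f^{(2)}\ge 0$.

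For the combination step defining $f^{n+1}$, the constraints $0\le a_2\le 1$ make both $a_2\Delta t/\varepsilon$ and $(1-a_2)\Delta t/\varepsilon$ non-negative, so (\ref{ppcond}) applied to $f^n$ shows that $\exp((1-a_2)\Delta t\,\mQ/\varepsilon)f^n\ge 0$. Since $0\le w\le 1$, the bracket is a convex combination of two non-negative quantities and is itself non-negative, after which a final use of (\ref{ppcond}) with $s=a_2\Delta t/\varepsilon\ge 0$ delivers $f^{n+1}\ge 0$.

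The only mildly subtle point is the edge case $b_1=0$ or $b_2=0$: the convention that the ratio in (\ref{CFL}) is then infinite removes any constraint on $\Delta t$ from that stage, while (\ref{cflcond}) holds trivially because the transport contribution vanishes. I do not anticipate a genuine obstacle; the argument is essentially a bookkeeping check that each numerical coefficient falls into the range required by one of the two structural hypotheses on $\mT$ and $\mQ$.
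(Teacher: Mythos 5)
Your argument is correct and is exactly the reasoning the paper has in mind (the paper states this proposition as immediate from the assumptions (\ref{cflcond}) and (\ref{ppcond}) without writing out the stages): each exponential step preserves non-negativity by (\ref{ppcond}) since $a_0,a_1,a_2,(1-a_2)\ge 0$, each transport step does so by (\ref{cflcond}) since the CFL bound puts $b_1\Delta t$ and $b_2\Delta t$ in $[0,\Delta t_{\text{FE}}]$, and $0\le w\le 1$ makes the final bracket a convex combination of non-negative functions. Your handling of the degenerate case $b_1=0$ or $b_2=0$ is also consistent with the paper's convention.
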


We next derive the conditions for (\ref{scheme}) to be second order in the kinetic regime. Without loss of generality, we assume $\varepsilon=1$.

First of all, given the solution $f^n=f(t_n)$, if we Taylor expand the exact solution of (\ref{ode}) at $t_{n+1}$ around $t_n$, we have
\begin{equation}\label{exact}
\begin{split}
\exp(\Delta t (\mT+\mQ))f^n = & f^n + \Delta t (\mT(f^n)+\mQ(f^n)) \\ & + \frac{1}{2}\Delta t^2 (\mT'(f^n)\mT(f^n) + \mT'(f^n)\mQ(f^n) + \mQ'(f^n)\mT(f^n) + \mQ'(f^n)\mQ(f^n)) \\&+ O(\Delta t^3),
\end{split}
\end{equation}
where $\mQ'$, $\mT'$ are the Fr\'{e}chet derivative of $\mQ$ and $\mT$.

Similarly the exact solution of (\ref{homo_ode}) at $t_{n+1}$ is approximated by 
\begin{equation} 
\exp(\Delta t \mQ)f^n = f^n + \Delta t \mQ(f^n) + \frac{1}{2}\Delta t^2 \mQ'(f^n)\mQ(f^n) + O(\Delta t^3).
\end{equation} 
Using this in the first equation of (\ref{scheme}), we have
\begin{equation}
\begin{split}
f^{(0)} = & f^n +a_0 \Delta t \mQ(f^n) +  \frac{1}{2}a_0^2\Delta t^2 \mQ'(f^n)\mQ(f^n) + O(\Delta t^3).
\end{split}
\end{equation}
Continuing the Taylor expansion of $f^{(1)}$, $f^{(2)}$, and $f^{n+1}$ in (\ref{scheme}), we have
\begin{equation}
\begin{split}
f^{(1)} = & (f^{(0)} + b_1 \Delta t \mT (f^{(0)})) +a_1\Delta t \mQ(f^{(0)} + b_1 \Delta t \mT (f^{(0)})) \\ & +\frac{1}{2}a_1^2\Delta t^2 \mQ'(f^{(0)} + b_1 \Delta t \mT (f^{(0)}))\mQ(f^{(0)} + b_1 \Delta t \mT (f^{(0)})) + O(\Delta t^3) \\ 
= & f^n + \Delta t( (a_0+a_1)\mQ(f^n) + b_1\mT (f^n)) \\ & + \Delta t^2\left(b_1a_0\mT'(f^n)\mQ(f^n) +  a_1b_1\mQ'(f^n)\mT(f^n) + \frac{1}{2}(a_0+a_1)^2 \mQ'(f^n)\mQ(f^n)\right) + O(\Delta t^3).
\end{split}
\end{equation}
\begin{equation}
\begin{split}
f^{(2)} = & f^{(1)} + b_2 \Delta t \mT (f^{(1)}) \\=& f^n + \Delta t( (a_0+a_1)\mQ(f^n) + b_1\mT (f^n)) \\ & + \Delta t^2\left(b_1a_0\mT'(f^n)\mQ(f^n) +  a_1b_1\mQ'(f^n)\mT(f^n) + \frac{1}{2}(a_0+a_1)^2 \mQ'(f^n)\mQ(f^n)\right)  \\ & + b_2\Delta t\mT(f^n) + b_2\Delta t^2\mT'(f^n)((a_0+a_1)\mQ(f^n) + b_1\mT(f^n)) + O(\Delta t^3) \\
= & f^n + \Delta t( (a_0+a_1)\mQ(f^n) + (b_1+b_2)\mT (f^n)) \\ & + \Delta t^2\left(b_1b_2\mT'(f^n)\mT(f^n) + (b_1a_0+b_2a_0+b_2a_1)\mT'(f^n)\mQ(f^n)\right. \\ & \left.+  a_1b_1\mQ'(f^n)\mT(f^n) + \frac{1}{2}(a_0+a_1)^2 \mQ'(f^n)\mQ(f^n)\right)  + O(\Delta t^3).
\end{split}
\end{equation}
\begin{equation}
\begin{split}
wf^{(2)} + & (1-w) \exp((1-a_2)\Delta t \mQ)f^n = w\left[f^n + \Delta t( (a_0+a_1)\mQ(f^n) + (b_1+b_2)\mT (f^n)) \right.\\ & + \Delta t^2\left(b_1b_2\mT'(f^n)\mT(f^n) + (b_2a_1+b_1a_0+b_2a_0)\mT'(f^n)\mQ(f^n)\right. \\ &\left.\left. + a_1b_1\mQ'(f^n)\mT(f^n) + \frac{1}{2}(a_0+a_1)^2 \mQ'(f^n)\mQ(f^n)\right)\right] \\ & + (1-w)\left[f^n+(1-a_2)\Delta t \mQ(f^n)+\frac{1}{2}(1-a_2)^2\Delta t^2 \mQ'(f^n)\mQ(f^n)\right] + O(\Delta t^3) \\
= & f^n + \Delta t[ (w(a_0+a_1)+(1-w)(1-a_2))\mQ(f^n) + w(b_1+b_2)\mT (f^n)] + \Delta t^2\left[wb_1b_2\mT'(f^n)\mT(f^n) \right. \\ & +w(b_2a_1+b_1a_0+b_2a_0)\mT'(f^n)\mQ(f^n) + wa_1b_1\mQ'(f^n)\mT(f^n) \\ & \left.+ \frac{1}{2}(w(a_0+a_1)^2+(1-w)(1-a_2)^2) \mQ'(f^n)\mQ(f^n)\right] + O(\Delta t^3). 
\end{split}
\end{equation}
Finally,
\begin{equation}\label{accu}
\begin{split}
f^{n+1} = &  f^n + \Delta t[ (w(a_0+a_1)+(1-w)(1-a_2))\mQ(f^n) + w(b_1+b_2)\mT (f^n)] + \Delta t^2\left[wb_1b_2\mT'(f^n)\mT(f^n)\right.  \\ & +w(b_2a_1+b_1a_0+b_2a_0)\mT'(f^n)\mQ(f^n) + wa_1b_1\mQ'(f^n)\mT(f^n)\\ &\left.+ \frac{1}{2}(w(a_0+a_1)^2+(1-w)(1-a_2)^2) \mQ'(f^n)\mQ(f^n)\right]  \\ & + a_2\Delta t [\mQ(f^n) + \Delta t \mQ'(f^n)( (w(a_0+a_1)+(1-w)(1-a_2))\mQ(f^n) + w(b_1+b_2)\mT (f^n)) ] \\ & + \frac{1}{2}a_2^2\Delta t^2 \mQ'(f^n)\mQ(f^n) + O(\Delta t^3) \\
= &  f^n + \Delta t[ (w(a_0+a_1+a_2)+(1-w))\mQ(f^n) + w(b_1+b_2)\mT (f^n)] + \Delta t^2\left[wb_1b_2\mT'(f^n)\mT(f^n) \right. \\ & +w(b_2a_1+b_1a_0+b_2a_0)\mT'(f^n)\mQ(f^n) + w(a_1b_1+a_2b_2+a_2b_1)\mQ'(f^n)\mT(f^n) \\ & \left.+ \frac{1}{2}(w(a_0+a_1+a_2)^2 + (1-w)) \mQ'(f^n)\mQ(f^n)\right] + O(\Delta t^3). \\ 
\end{split}
\end{equation}

Comparing (\ref{exact}) and (\ref{accu}), we arrive at the following order conditions:
\begin{equation}
\begin{split}
&w(a_0+a_1+a_2)+(1-w)=1; \quad w(b_1+b_2)=1; \quad wb_1b_2=\frac{1}{2};\\
&w(b_2a_1+b_1a_0+b_2a_0)=\frac{1}{2}; \quad w(a_1b_1+a_2b_2+a_2b_1)=\frac{1}{2};\\
&w(a_0+a_1+a_2)^2+(1-w)=1.
\end{split}
\end{equation}
Further simplification yields
\begin{proposition} \label{prop_ord}
The scheme (\ref{scheme}) is second-order accurate for $\varepsilon=O(1)$ provided
\begin{align} 
& a_0+a_1+a_2 = 1, \label{ord_cond1}\\
& w(b_1+b_2) = 1, \label{ord_cond2}\\
& wb_1b_2 = \frac{1}{2}, \label{ord_cond3}\\
& w(b_2a_1+(b_1+b_2)a_0) = \frac{1}{2}.\label{ord_cond4}
\end{align}
\end{proposition}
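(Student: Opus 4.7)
The proposition essentially asserts that the six coefficient-matching conditions collected right before its statement can be reduced to the four conditions listed in the proposition, so the plan is to carry out this reduction carefully and verify nothing is lost.

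My first move would be to fix notation. Label the six matching conditions in order as (i)--(vi), where (i) is $w(a_0+a_1+a_2)+(1-w)=1$, (ii) is $w(b_1+b_2)=1$, (iii) is $wb_1b_2=\tfrac12$, (iv) is $w(b_2a_1+b_1a_0+b_2a_0)=\tfrac12$, (v) is $w(a_1b_1+a_2b_2+a_2b_1)=\tfrac12$, and (vi) is $w(a_0+a_1+a_2)^2+(1-w)=1$. Note that (ii) forces $w\neq 0$, which I will use freely.

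Next I would dispatch the $a_0+a_1+a_2=1$ condition. Rewriting (i) as $w\bigl((a_0+a_1+a_2)-1\bigr)=0$ and using $w\neq 0$ yields \eqref{ord_cond1}. Substituting $a_0+a_1+a_2=1$ into (vi) gives $w\cdot 1+(1-w)=1$, an identity, so (vi) is automatically satisfied and may be discarded. Conditions (ii)--(iv) are literally \eqref{ord_cond2}--\eqref{ord_cond4} after grouping $b_1 a_0 + b_2 a_0 = (b_1+b_2) a_0$.

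The only remaining check, and the step I consider the crux of the reduction, is that condition (v) is redundant given \eqref{ord_cond1}, \eqref{ord_cond2}, and \eqref{ord_cond4}. I would substitute $a_2 = 1 - a_0 - a_1$ into the left-hand side of (v), expand, and then regroup using (ii) and (iv): expanding gives $w\bigl(a_1 b_1 + (1-a_0-a_1)(b_1+b_2)\bigr)$, which after cancellation of the $a_1 b_1$ terms becomes $w(b_1+b_2) - w\bigl(a_0(b_1+b_2)+a_1 b_2\bigr) = 1 - \tfrac12 = \tfrac12$. So (v) holds for free.

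Nothing here is technically deep; the main obstacle is purely bookkeeping, namely ensuring that the algebra in the redundancy check of (v) really closes up to the form of (iv) with the proper grouping $b_1 a_0 + b_2 a_0 + b_2 a_1$. Once that one expansion is verified, the four conditions \eqref{ord_cond1}--\eqref{ord_cond4} are exactly equivalent to the full set (i)--(vi), which together with the Taylor matching carried out in \eqref{exact}--\eqref{accu} establishes second-order consistency for $\varepsilon=O(1)$.
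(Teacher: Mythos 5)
Your proposal is correct and follows essentially the same route as the paper: the paper obtains the six Taylor-matching conditions and then states the proposition after an unspelled-out "further simplification," which is exactly the reduction you carry out (using $a_0+a_1+a_2=1$ to discharge the first and last conditions and showing the $\mQ'\mT$ condition is implied by \eqref{ord_cond1}, \eqref{ord_cond2}, \eqref{ord_cond4}). Your algebraic check of that redundancy is accurate, so nothing is missing.
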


Combining the positivity conditions and order conditions found in Propositions \ref{prop_pos} and \ref{prop_ord}, one can obtain a second-order positivity-preserving scheme for equation (\ref{ode}). To find a set of parameters satisfying these conditions, first notice that (\ref{ord_cond2}) and (\ref{ord_cond3}) imply $b_1,b_2$ are the solutions of the quadratic equation
\begin{equation}
b^2-\frac{1}{w}b + \frac{1}{2w} = 0,
\end{equation}
whose solutions are given by
\begin{equation}\label{b12}
b_{1,2} = \frac{1}{1\pm\sqrt{1-2w}}, \quad \text{for} \ 0<w\leq \frac{1}{2}.
\end{equation}
In order to obtain the best CFL condition (minimize $\max(b_1,b_2)$ in (\ref{CFL})), we choose 
\begin{equation} \label{coef}
w=\frac{1}{2},\quad b_1=b_2=1,
\end{equation}
hence the CFL condition (\ref{CFL}) is the same as the forward Euler method. Then (\ref{ord_cond1}) and (\ref{ord_cond4}) reduce to
\begin{equation} \label{coef1}
a_0+a_1+a_2=1,\quad a_0=a_2.
\end{equation}
To insure positivity, we only need additionally $a_0,a_1\geq 0$, $0\leq a_2 \leq 1$ (see (\ref{pos_cond})). However, to obtain a good AP property, we require
\begin{equation} \label{coef2}
a_0,a_1>0, \quad 0< a_2<1.
\end{equation}
This will be further elaborated in Section~\ref{sec:AP}.
One choice of $a_0,a_1,a_2$ is
\begin{equation} \label{coef3}
a_0=a_1=a_2=\frac{1}{3}.
\end{equation}

\begin{remark}
If one sets $\mQ=0$, then (\ref{scheme}) becomes an explicit second-order SSP Runge-Kutta scheme applied to the purely transport equation; moreover, our choice (\ref{coef}) just gives the standard optimal one, i.e., the improved Euler or Heun's method \cite{GST01, GKS11}. In what follows, we will refer this scheme as SSP-RK2. If one sets $\mT=0$, then (\ref{scheme}) becomes $f^{n+1}=\exp\left(\Delta t \frac{1}{\varepsilon} \mQ\right)f^n$ which is the exact solution to the homogeneous equation (\ref{homo_ode}).
\end{remark}

\begin{remark}
For $a_0,a_1>0$ and $0<a_2<1$, (\ref{scheme}) would require 4 times evaluation of the operator $\exp(s\mQ)$ in each time step. However, similar to the Strang splitting, one can combine the operator $\exp\left(a_2\Delta t\frac{1}{\varepsilon}\mQ\right)$ in the last stage of the $n$-th step with the operator $\exp\left(a_0\Delta t\frac{1}{\varepsilon}\mQ\right)$ in the first stage of the $(n+1)$-th step, so that effectively one only needs 3 times of such evaluations in each time step.
\end{remark}


\section{Application to specific kinetic equations and AP property}
\label{sec:AP}

By now, we have obtained a second-order positivity-preserving scheme ((\ref{scheme}) with coefficients satisfying (\ref{coef}) (\ref{coef1}) (\ref{coef2})) for the general stiff kinetic equation (\ref{ode}). In this section, we apply the scheme to some specific kinetic equations and discuss its AP property.

We will consider the equation (\ref{ode}) with the following collision operators:
\begin{itemize}
\item The BGK operator \cite{BGK54}, a simple relaxation type operator used to mimic the complicated Boltzmann collision operator:
\begin{equation}\label{BGK}
\mQ(f) = \eta(\mM[f] - f),
\end{equation}
where $\mM[f]$ is the Maxwellian defined by
\begin{equation} \label{Max}
\mM[f]=\frac{\rho}{(2\pi T)^{\frac{d}{2}}}\exp \left( -\frac{|v-u|^2}{2T}\right),
\end{equation}
with the density $\rho$, bulk velocity $u$, and temperature $T$ given by the moments of $f$:
\begin{equation} \label{mom}
\rho=\int_{\mathbb{R}^d}f\,\rd{v}, \quad u=\frac{1}{\rho}\int_{\mathbb{R}^d}f\,\rd{v}, \quad T=\frac{1}{d\rho}\int_{\mathbb{R}^d}f|v-u|^2\,\rd{v},
\end{equation}
and $\eta$ is some positive function depending only on $\rho$ and $T$.

\item The ES-BGK operator \cite{Holway66}, a generalized BGK model used to fit realistic values of the transport coefficients:
\begin{equation}\label{ESBGK}
\mQ(f) = \eta(\mG[f]-f),
\end{equation}
where $\mG[f]$ is a Gaussian function defined by
\begin{equation}
\mG[f] = \frac{\rho}{\sqrt{\det(2\pi \bar{T})}}\exp\left(-\frac{1}{2}(v-u)^T \bar{T}^{-1}(v-u)\right),
\end{equation}
with $\rho$, $u$, and $T$ given in (\ref{mom}) and 
\begin{equation}\label{barT}
\bar{T} = (1-\nu)TI+\nu\Theta,\quad \Theta=\frac{1}{\rho}\int_{\mathbb{R}^d} f(v-u)\otimes (v-u)\,\rd{v},
\end{equation}
where $-\frac{1}{2}\leq \nu <1$ is a parameter and $I$ is the identity matrix. $\eta$ is some positive function of $\rho$ and $T$.

\item The Boltzmann collision operator \cite{Cercignani}, a fundamental equation in kinetic theory describing the binary collisions in a rarefied gas:
\begin{equation} \label{Boltz}
\mQ(f) = \int_{\mathbb{R}^d}\int_{S^{d-1}}B(v-v_*, \sigma) [f(v')f(v_*')-f(v)f(v_*)]\,\rd{\sigma}\,\rd{v_*},
\end{equation}
where $v'$ and $v_*'$ (post-collisional velocities) are defined in terms of $v$ and $v_*$ (pre-collisional velocities) as
\begin{equation}
v'=\frac{v+v_*}{2}+\frac{|v-v_*|}{2}\sigma, \quad v_*'=\frac{v+v_*}{2}-\frac{|v-v_*|}{2}\sigma,
\end{equation}
with $\sigma$ being a vector varying on the unit sphere $S^{d-1}$. $B$ is the collision kernel characterizing the scattering rate and is a non-negative function.

\item The kinetic Fokker-Planck operator \cite{Villani02}, a kinetic model describing the drift and diffusion effects of particles: 
\begin{equation}\label{FP}
\mQ(f) = \nabla_v\cdot \left(\mM[f]\nabla_v \frac{f}{\mM[f]}\right),
\end{equation}
where $\mM[f]$ is the same as in the BGK model. Using the definition (\ref{Max}), (\ref{FP}) can be written equivalently as
\begin{equation}
\mQ(f)=\nabla_v \cdot \left(\nabla_v f+\frac{(v-u)}{T}f \right),
\end{equation}
with $u$ and $T$ given by (\ref{mom}). This is the more commonly seen drift-diffusion type equation in the literature.

\end{itemize}

All of the above collision operators $\mQ$ satisfy the following properties which can be found in many standard textbooks \cite{Cercignani, Villani02} with perhaps the ES-BGK operator as an exception whose proof is given in \cite{ALPP00}.
\begin{itemize}

\item Conservation of mass, momentum, and energy:
\begin{equation}
\langle \mQ(f) \phi \rangle  = 0, \quad \langle \, \cdot \, \phi \rangle :=\int_{\mathbb{R}^d}\cdot \,\phi\,\rd{v}, \quad \phi(v)=\left(1,v,\frac{|v|^2}{2}\right)^T,
\end{equation}
for any function $f$. 

This implies that $\exp(s\mQ)g$, the solution to the homogeneous equation (\ref{homo_ode}) at $t=t_0+s$ with initial data $f|_{t=t_0}=g$, satisfies the conservation property
\begin{equation} \label{cons}
\langle (\exp(s\mQ)g) \phi \rangle = \langle g \phi \rangle ,\quad \forall s\ge 0.
\end{equation}

\item Decay of entropy
\begin{equation} \label{ineq1}
\int_{\mathbb{R}^d}\mQ(f)\log f\,\rd{v}\leq 0,
\end{equation}
further,
\begin{equation}
\int_{\mathbb{R}^d}\mQ(f)\log f\,\rd{v}=0  \Longleftrightarrow \mQ(f)=0 \ \Longleftrightarrow f=\mM[f],
\end{equation}
where $\mM[f]$ is the Maxwellian defined in (\ref{Max}).

This implies that $\exp(s\mQ)g$, the solution to the homogeneous equation (\ref{homo_ode}) at $t=t_0+s$ with initial data $f|_{t=t_0}=g$ has the long time behavior
\begin{equation}\label{longtime}
\lim_{s\rightarrow\infty} \exp(s\mQ)g = \mM[g],
\end{equation}
i.e., $\exp(s\mQ)g$ approaches the Maxwellian determined by the moments of the initial condition.
\end{itemize}
Using these properties, it is easy to show that the spatially inhomogeneous equation (\ref{KE}) has the compressible Euler equations as the leading-order asymptotics when $\varepsilon \rightarrow 0$. Indeed, taking the moments $\langle\, \cdot\, \phi \rangle$ on both sides of (\ref{KE}), one obtains
\begin{equation} \label{localcon}
\partial_t \langle f\phi \rangle + \nabla_x \cdot \langle f v \phi \rangle=0
\end{equation}
by the conservation property of $\mQ$. On the other hand, when $\varepsilon \rightarrow 0$, (\ref{KE}) formally implies $\mQ(f)\rightarrow 0$, hence $f\rightarrow \mM[f]$. Substituting $f=\mM[f]:=\mM[U]$ into (\ref{localcon}) yields 
\begin{equation} \label{Euler1}
\partial_t U + \nabla_x \cdot \langle \mM[U] v \phi \rangle=0,
\end{equation}
where we used the vector $U$ to denote the first $d+2$ moments of $f$: $U=(\rho,\rho u,E)^T$ with $E=\frac{1}{2}\rho u^2+\frac{d}{2}\rho T$ being the total energy. The closed system (\ref{Euler1}) is nothing but the compressible Euler equations
\begin{align} \label{Euler}
\left\{
\begin{array}{l}
\displaystyle \partial_t \rho+\nabla_x\cdot (\rho u)=0,\\[8pt]
\displaystyle \partial_t (\rho u)+\nabla_x\cdot (\rho u\otimes u +pI)=0,\\[8pt]
\displaystyle \partial_t E+\nabla_x\cdot ((E+p)u)=0,
\end{array}\right.
\end{align}
where $p=\rho T$ is the pressure.

We now prove the AP property of the proposed scheme.
\begin{proposition}  \label{prop_AP}
The scheme (\ref{scheme}) (with coefficients satisfying (\ref{coef}) (\ref{coef1}) (\ref{coef2})) applied to the stiff kinetic equation (\ref{ode}) with the collision operator $\mQ$ being the BGK operator (\ref{BGK}), the ES-BGK operator (\ref{ESBGK}), the Boltzmann collision operator (\ref{Boltz}), and the kinetic Fokker-Planck operator (\ref{FP}) is asymptotic-preserving, i.e., for any initial data and fixed $\Delta t$, in the limit $\varepsilon \rightarrow 0$, (\ref{scheme}) becomes a second-order scheme SSP-RK2 applied to the limiting Euler system (\ref{Euler}). Furthermore,
\begin{equation}
\lim_{\varepsilon \rightarrow 0} f^{n+1}=  \mM[U^{n+1}],
\end{equation}
i.e., after each time step, $f^{n+1}$ is driven to its corresponding Maxwellian.
\end{proposition}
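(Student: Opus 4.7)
The plan is to trace each stage of the scheme (\ref{scheme}) as $\varepsilon\to 0$ with $\Delta t$ fixed, using two ingredients of $\mQ$: the conservation property (\ref{cons}), so $\exp(s\mQ)$ preserves the moments $\langle\cdot\,\phi\rangle$; and the long-time behavior (\ref{longtime}), so $\exp(s\mQ)g\to\mM[g]$ as $s\to\infty$. Because the coefficient choice (\ref{coef2}) enforces $a_0,a_1,a_2>0$ and $1-a_2>0$, every exponential appearing in (\ref{scheme}) takes the form $\exp(c\Delta t\,\mQ/\varepsilon)$ with $c>0$, and hence in the limit $\varepsilon\to 0$ acts as the Maxwellian projection $g\mapsto\mM[g]$.

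Set $U^n=\langle f^n\phi\rangle$, and introduce the discrete spatial operator $\mathcal{L}$ on Euler variables defined by $\mathcal{L}(U):=\langle\mT(\mM[U])\phi\rangle$, which is a consistent discretization of $-\nabla_x\cdot F(U)$ with $F(U):=\langle v\,\mM[U]\phi\rangle$ (this is just (\ref{localcon}) evaluated at $f=\mM[U]$). Then I would cascade the limits stage by stage: first, $f^{(0)}\to\mM[U^n]$; second, the bracket inside the $f^{(1)}$ equation has moments that converge to $U^{(1)}:=U^n+\Delta t\,\mathcal{L}(U^n)$, a forward Euler step for (\ref{Euler}) with step $b_1\Delta t=\Delta t$, so the outer exponential gives $f^{(1)}\to\mM[U^{(1)}]$; third, the non-exponential step yields $f^{(2)}\to\mM[U^{(1)}]+\Delta t\,\mT(\mM[U^{(1)}])$, whose moments are $U^{(2)}:=U^{(1)}+\Delta t\,\mathcal{L}(U^{(1)})$.

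For the last stage, $1-a_2>0$ forces $\exp((1-a_2)\Delta t\mQ/\varepsilon)f^n\to\mM[U^n]$, so by conservation the argument of the outermost exponential has limiting moments $wU^{(2)}+(1-w)U^n=\tfrac{1}{2}(U^n+U^{(2)})$ under (\ref{coef}). The outer projection (enabled by $a_2>0$) then produces $f^{n+1}\to\mM[U^{n+1}]$ with
\begin{equation*}
U^{n+1}=\tfrac{1}{2}U^n+\tfrac{1}{2}\bigl(U^{(1)}+\Delta t\,\mathcal{L}(U^{(1)})\bigr),
\end{equation*}
which is precisely the Heun/SSP-RK2 update for the Euler system (\ref{Euler}). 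This simultaneously establishes the reduction of (\ref{scheme}) to SSP-RK2 at the Euler level and the claim $f^{n+1}=\mM[U^{n+1}]$.

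The argument is essentially bookkeeping on top of (\ref{cons}) and (\ref{longtime}); the only delicate point, and the one I would emphasize, is the role of the strict inequalities in (\ref{coef2}). If any of $a_0,a_1,a_2$ were zero, the corresponding exponential would reduce to the identity rather than a Maxwellian projection, and $f^{(0)}$, $f^{(1)}$, or $f^{n+1}$ would fail to be Maxwellian in the limit; similarly, $1-a_2>0$ is what drives the ``leftover'' $f^n$ piece in the last-stage convex combination to its own Maxwellian, so that $f^{n+1}$ ends up a single Maxwellian rather than a non-equilibrium blend. Because (\ref{cons}) and (\ref{longtime}) are satisfied by all four collision operators listed, the proof is unified across BGK (\ref{BGK}), ES-BGK (\ref{ESBGK}), Boltzmann (\ref{Boltz}), and Fokker--Planck (\ref{FP}).
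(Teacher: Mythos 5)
Your proof is correct and follows essentially the same route as the paper's: take moments of (\ref{scheme}) using the conservation property (\ref{cons}), invoke the long-time behavior (\ref{longtime}) with $a_0,a_1,a_2>0$ so that each operator $\exp(c\Delta t\,\mQ/\varepsilon)$ acts as the Maxwellian projection in the limit, and identify the resulting moment update as the Heun/SSP-RK2 step for the limiting Euler system (\ref{Euler}), together with $f^{n+1}\to\mM[U^{n+1}]$. One minor remark: the condition $1-a_2>0$ is not actually needed for this conclusion, since by (\ref{cons}) the moments of the inner bracket equal $wU^{(2)}+(1-w)U^n$ regardless, and the outer exponential with $a_2>0$ already performs the projection onto $\mM[U^{n+1}]$ -- but this extra emphasis does not affect the correctness of your argument.
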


\begin{proof}
First of all, taking the moments $\langle \, \cdot \, \phi\rangle$ on (\ref{scheme}) and using (\ref{cons}), one obtains
\begin{equation} \label{mom1}
\begin{split}
& U^{(0)}=U^n,\\
& U^{(1)} = U^{(0)} + b_1 \Delta t \langle \mT (f^{(0)}) \phi \rangle, \\
& U^{(2)} = U^{(1)} + b_2 \Delta t \langle\mT (f^{(1)})\phi \rangle, \\
& U^{n+1} = w U^{(2)} + (1-w) U^n.
\end{split}
\end{equation}
On the other hand, for $a_0,a_1,a_2>0$, using (\ref{longtime}), it can be seen from (\ref{scheme}) that as $\varepsilon\rightarrow 0$, $f^{(0)}$, $f^{(1)}$, and $f^{n+1}$ are driven to their corresponding Maxwellian:
\begin{equation}
\begin{split}
& f^{(0)} \rightarrow \mM[U^n]=\mM[U^{(0)}],\\
& f^{(1)} \rightarrow \mM[U^{(0)} + b_1 \Delta t \langle \mT (f^{(0)}) \phi \rangle]=\mM[U^{(1)}], \\
& f^{n+1} \rightarrow \mM[w U^{(2)} + (1-w) U^n]=\mM[U^{n+1}].
\end{split}
\end{equation}
Finally, substituting $f^{(0)}$ and $f^{(1)}$ into (\ref{mom1}), one has
\begin{equation}\label{moments}
\begin{split}
& U^{(1)} = U^n + b_1 \Delta t \langle\mT (M[U^n])\phi \rangle, \\
& U^{(2)} = U^{(1)} + b_2 \Delta t \langle\mT (M[U^{(1)}] )\phi \rangle, \\
& U^{n+1} = w U^{(2)} + (1-w) U^n.
\end{split}
\end{equation}
With the coefficients (\ref{coef}) and $\mT$ a discretized operator for $-v\cdot \nabla_x$, this is just a kinetic scheme for the limiting Euler equations (\ref{Euler1}) using the SSP-RK2 time discretization.
\end{proof}

\begin{remark}
Note that the requirement for nonzero $a_1$, $a_2$, $a_3$ plays an important role here. In order for the scheme to have a nice AP property (works for any initial data, drives $f$ to the corresponding Maxwellian after each time step, the limiting scheme maintains second-order accuracy, etc.), we need all these coefficients to be non-degenerate. See also the discussion in Section~\ref{subsec:com}.
\end{remark}

\subsection{A slightly different application}

A slightly different example which does not fit exactly into the above framework is the Vlasov-Poisson-Fokker-Planck (VPFP) system, a kinetic description of the Brownian motion of a large system of particles in a surrounding bath. Since it can also be treated using the proposed method, we briefly describe it in this subsection.

The system in the high-field regime reads \cite{Poupaud92}
\begin{equation}\label{VPFP}
\partial_t f + v\cdot\nabla_x f - \frac{1}{\varepsilon}\nabla_x\psi\cdot\nabla_v f =  \frac{1}{\varepsilon}\nabla_v\cdot (\nabla_v f + vf),
\end{equation}
with the electric potential $\psi=\psi(t,x)$ solving the Poisson equation
\begin{equation}\label{Poisson}
-\Delta_x \psi = \rho - h,
\end{equation}
where $\rho=\langle f \rangle $ is the density, and $h=h(x)$ is the background charge density satisfying the neutrality condition $
\int_{\mathbb{R}^d} \rho(0,x) \rd{x} = \int_{\mathbb{R}^d}h(x)\rd{x}$. 

One can write (\ref{VPFP}) as
\begin{equation} \label{VPFP1}
\partial_t f + v\cdot\nabla_x f = \frac{1}{\varepsilon}\mQ(f),\quad \mQ(f) = \nabla_v\cdot (\nabla_v f + (v+\nabla_x\psi)f).
\end{equation}
Here $\mQ(f)$ is a Fokker-Planck type operator, and can be written in the form
\begin{equation}\label{FP1}
\mQ(f) = \nabla_v\cdot \left(M\nabla_v \frac{f}{M}\right),\quad M(v) = M[\psi](v) = \frac{1}{(2\pi)^{d/2}}\exp\left(-\frac{(v+\nabla_x\psi)^2}{2}\right).
\end{equation}
This $\mQ$ satisfies the mass conservation $\langle \mQ(f)\rangle = 0$. Also, $\mQ(f)=0 \Longleftrightarrow f=\rho M[\psi]$.

Now taking the moment $\langle \,\cdot\, \rangle$ on both sides of (\ref{VPFP1}), one has
\begin{equation} 
\partial_t \rho +\nabla_x \cdot \langle v f\rangle=0.
\end{equation}
On the other hand, as $\varepsilon\rightarrow 0$ in (\ref{VPFP1}), formally $\mQ(f)\rightarrow 0$, hence $f\rightarrow \rho M[\psi]$. Substituting this into the above equation, one obtains the limiting equation
\begin{equation}\label{VPFP_limit}
\partial_t \rho - \nabla_x\cdot(\rho \nabla_x\psi) = 0.
\end{equation}

Starting with the form (\ref{VPFP1}), it is easy to see that the scheme (\ref{scheme}) can be applied directly and all the previous discussion regarding the AP property carries over straightforwardly. We omit the detail.

\subsection{Comparison with existing methods}
\label{subsec:com}

Searching the literature, there have been several methods available to solve the stiff kinetic equation (\ref{KE}) or equations of a similar structure. Therefore, we devote this subsection to a careful comparison of our method with some of the existing methods. For a general discussion on exponential integrators, the readers are referred to the review article \cite{MO10}.

\begin{itemize}

\item If one replaces the solution operator $\exp(s\mQ)$ by any second-order approximation, then (\ref{scheme}) remains second order. In particular, if $\exp(s\mQ)$ can be approximated by a second (or higher) order positivity-preserving and AP solver, then one can replace $\exp(s\mQ)$ in (\ref{scheme}) with this solver and still maintains the second-order accuracy, positivity, and AP property. For example, the scheme
\begin{equation}
\begin{split}
& g^{(1)} = g + s\mQ(g^{(1)}), \\
& g^1 = g^{(1)} - \frac{1}{2}s^2\mQ'(g^{(1)})\mQ(g^1),
\end{split}
\end{equation}
produces $g^1$ at $t=t_0+s$, which is a second-order positivity-preserving AP approximation to the exact solution $\exp(s\mQ)g$, in the case of the BGK operator. Using this in (\ref{scheme}) would give an IMEX Runge-Kutta method with correction, similar to our previous work \cite{HSZ18}.


\item
The following two existing second-order methods for (\ref{ode}) are special cases of (\ref{scheme}):
\begin{enumerate}
\item If one considers the second-order Strang splitting
\begin{equation}
\exp(\Delta t (\mT+\mQ)) = \exp\left(\frac{\Delta t}{2}\mQ\right)\exp\left(\Delta t\mT\right)\exp\left(\frac{\Delta t}{2}\mQ\right) + O(\Delta t^3),
\end{equation}
and discretizes $\exp(\Delta t\mT)$ by SSP-RK2, then one arrives at (\ref{scheme}) with 
\begin{equation}
a_0=a_2=\frac{1}{2},\quad a_1=0,\quad b_1=b_2=1,\quad w=\frac{1}{2}.
\end{equation}

\item For the case $\mQ(f)=-\mu f$ with $\mu>0$ a constant, \cite{HS18} rewrites (\ref{ode}) as
\begin{equation}\label{ode_rewrite}
\frac{\rd{}}{\rd{t}}\left(\exp\left(-\frac{t}{\varepsilon}\mQ\right)f\right) = \exp\left(-\frac{t}{\varepsilon}\mQ\right)\mT (f),
\end{equation}
and applies SSP-RK2 to (\ref{ode_rewrite}) directly. Then one arrives at (\ref{scheme}) with 
\begin{equation}
a_0=a_2=0,\quad a_1=1,\quad b_1=b_2=1,\quad w=\frac{1}{2}.
\end{equation}

\end{enumerate}

These two methods are not AP or suffer from order degeneracy in the fluid regime. In fact, in the first method $a_1=0$ and thus $f^{(1)}$ is not at local Maxwellian. Therefore, the flux term $b_2\Delta t \langle \mT(f^{(1)}) \phi \rangle$ in (\ref{mom1}) only approximates the flux in the limiting system up to first-order accuracy, which makes the limiting scheme first order. This order degeneracy of the Strang spliting was discovered already in an early work \cite{Jin95}. Similarly in the second method $a_0=a_2=0$ and thus $f^{(0)} = f^n$ is not at local Maxwellian, which means the flux term $b_1\Delta t \langle \mT(f^{(0)}) \phi \rangle$ in (\ref{mom1}) is only first-order accurate in the limiting scheme. Moreover, even one starts with a consistent initial data, i.e., $f^n=\mM[f^n]$, this method will not drive $f^{n+1}$ to the local Maxwellian since $a_2=0$. Hence this error will pollute the solution as well in the next time step.

For the second method, \cite{HS18} showed that the limiting scheme is second order with consistent initial data, in the case of $\mQ(f)=-\mu f$ and $\mT$ satisfying a maximum principle. Their proof is based on the following fact: if $f$ is at local equilibrium (say $f-f^{\text{eq}} =O(\varepsilon)$), then $f+\Delta t \mT(f)$ is also at local equilibrium ($(f+\Delta t \mT(f))-(f^{\text{eq}}+\Delta t \mT(f^{\text{eq}}))=O(\varepsilon)$). This is clearly not the case for equation (\ref{KE}), since generally speaking $f-\Delta t v\cdot\nabla_x f$ is $O(\Delta t)$ away from its local Maxwellian, even if $f$ itself is at local Maxwellian.  

\item In \cite{DP11}, an exponential Runge-Kutta method was proposed for the homogeneous Boltzmann equation. This method is high order, AP, and positivity-preserving. But it is extended to the non-homogeneous equation (\ref{KE}) based on the Strang-splitting, hence suffers from the order degeneracy as mentioned above. 

\item A non-splitting version of the exponential Runge-Kutta method was proposed in \cite{LP14} by applying an explicit Runge-Kutta scheme to a reformulated spatially inhomogeneous Boltzmann equation. There are two types of schemes proposed. One uses the time varying Maxwellian (called `ExpRK-V' in the paper) which cannot guarantee the positivity of $f$ except the density $\rho$. The other one is based on a fixed Maxwellian (called `ExpRK-F' in the paper) and can preserve the positivity of $f$ provided a separate fluid equation is solved simultaneously and the underlying Runge-Kutta scheme satisfies certain conditions. However, the existence of such schemes (second or third order) that satisfy these conditions as well as AP remains to be discovered. Indeed, the second-order midpoint method and third-order Heun's method cannot satisfy these conditions, unlike what was claimed in \cite{LP14}.


\end{itemize}

To summarize, by a careful choice of the coefficients (\ref{coef}) (\ref{coef1}) (\ref{coef2}), our scheme (\ref{scheme}) is different from any existing exponential Runge-Kutta type methods. It is second order, positivity-preserving, and AP (capturing the Euler limit with second-order accuracy for any initial data).


\section{Solving the homogeneous equation}
\label{sec:homo}

A key assumption we made in Section~\ref{sec:new} is that the solution to the homogeneous equation (\ref{homo_ode}), or equivalently the solution operator $\exp(s\mQ)$, can be found exactly. From the previous section we have also seen that this can be relaxed by finding an approximate solution, or an approximate operator $\widetilde{\exp}(s \mQ)$, such that it is at least second-order accurate in time, i.e.,
\begin{equation}
\exp(s\mQ)g  \approx \widetilde{\exp}(s \mQ) g + O(s^3);
\end{equation}
positivity preserving, i.e.,
\begin{equation}
 g\ge 0 \, \Longrightarrow \,\widetilde{\exp}(s \mQ) g \ge 0, \quad \forall \ \text{constant} \  s\ge 0;
\end{equation}
and AP, for which to hold we need $\widetilde{\exp}(s \mQ)$ satisfy the same long time behavior as $\exp(s\mQ)$, i.e.,
\begin{equation} \label{longtime1}
\lim_{s\rightarrow\infty} \widetilde{\exp}(s \mQ) g = \mM[g]. 
\end{equation}

In the following, we will provide the strategy to construct $\exp(s\mQ)$ or $\widetilde{\exp}(s \mQ)$ for all the kinetic equations discussed in Section~\ref{sec:AP}.

\subsection{The BGK equation}

For the homogeneous BGK equation
\begin{equation}
\partial_t f=\mQ(f) = \eta(\mM[f] - f), \quad f|_{t=t_0} = g,
\end{equation}
since $\mQ$ conserves mass, momentum and energy, $\mM[f]=\mM[g]$ does not change with time, neither does $\eta$. Hence the solution at $t=t_0+s$ can be found analytically:
\begin{equation}\label{BGK_explicit}
\exp(s\mQ) g = e^{-\eta s}g + (1-e^{-\eta s})\mM[g].
\end{equation}

\subsection{The ES-BGK equation}

For the homogeneous ES-BGK equation
\begin{equation} \label{homo_ESBGK}
\partial_t f= \mQ(f) = \eta(\mG[f]-f), \quad f|_{t=t_0} = g,
\end{equation}
since $\mQ$ conserves mass, momentum and energy, $\rho,u,T$ do not change with time, neither does $\eta$. Taking the moment $\langle \,\cdot\, \frac{1}{\rho}(v-u)\otimes (v-u) \rangle $ on both sides of (\ref{homo_ESBGK}) gives
\begin{equation}
\partial_t \Theta = \eta\left(\frac{1}{\rho}\langle (v-u)\otimes (v-u) \mG[f]\rangle - \Theta\right) = \eta (\bar{T}-\Theta)= \eta (1-\nu)(TI-\Theta),
\end{equation}
whose solution is given by
\begin{equation}\label{Theta_t}
\Theta(t_0+s) = e^{-\eta(1-\nu)s}\Theta(t_0) + (1-e^{-\eta(1-\nu)s}) TI.
\end{equation}
Hence
\begin{equation} \label{T0}
\bar{T}(t_0+s) = \nu e^{-\eta(1-\nu)s}\Theta(t_0) + (1-\nu e^{-\eta(1-\nu)s}) TI.
\end{equation}
On the other hand, (\ref{homo_ESBGK}) can be integrated to yield
\begin{equation}\label{homo_ode_ESBGKs1}
\exp(s\mQ)g=f(t_0+s) = e^{-\eta s}g + \int_{t_0}^{t_0+s} \eta e^{-\eta (t_0+s-\tau)}\mG[f(\tau)]\,\rd{\tau},
\end{equation}
where $\mG[f(\tau)]$ only depends on $\rho,u,\bar{T}(\tau)$. Rather than solving (\ref{homo_ode_ESBGKs1}) exactly, we propose to use a quadrature to approximate the integral part. We adopt the two-point Gauss-Lobatto quadrature, that is,
\begin{equation} \label{GL}
\int_{t_0}^{t_0+s} \eta e^{-\eta (t_0+s-\tau)}\psi(\tau)\,\rd{\tau} \approx w_1\psi(t_0) + w_2\psi(t_0+s),
\end{equation}
where the weights $w_1,w_2$ are determined by requiring this approximation to be exact for $\psi(\tau) = 1,\tau$. A simple calculation gives
\begin{equation}
w_1 = \frac{1-e^{-\eta s}}{\eta s}-e^{-\eta s},\quad w_2 = 1-\frac{1-e^{-\eta s}}{\eta s}.
\end{equation}
The quadrature in (\ref{GL}) has an error $O(s^3)$ for general functions.

Therefore, we approximate the solution in (\ref{homo_ode_ESBGKs1}) as
\begin{equation}\label{homo_ode_ESBGKs2}
\exp(s\mQ)g  \approx \widetilde{\exp}(s \mQ) g:=e^{-\eta s}g + \left(\frac{1-e^{-\eta s}}{\eta s}-e^{-\eta s}\right)\mG[\rho,u,\bar{T}(t_0)] + \left(1-\frac{1-e^{-\eta s}}{\eta s}\right)\mG[\rho,u,\bar{T}(t_0+s)],
\end{equation}
with $\bar{T}(t_0+s)$ given by (\ref{T0}). This approximation is positivity-preserving since (\ref{homo_ode_ESBGKs2}) is a convex combination of positive functions. The resulting scheme is AP because $\widetilde{\exp}(s \mQ) g$ satisfies the long time behavior (\ref{longtime1}): as $s\rightarrow\infty$, one has $\bar{T}(t_0+s)\rightarrow TI$, and thus $\mG[\rho,u,\bar{T}(t_0+s)] \rightarrow \mM[g]$. Also, three weights in (\ref{homo_ode_ESBGKs2}) converge to 0, 0, 1, respectively, hence $\widetilde{\exp}(s \mQ) g \rightarrow \mM[g]$.

\subsection{The Boltzmann equation}

For the homogeneous Boltzmann equation 
\begin{equation}\label{homo_ode_B}
\partial_t f =\mQ(f)=\int_{\mathbb{R}^d}\int_{S^{d-1}}B(v-v_*, \sigma) [f(v')f(v_*')-f(v)f(v_*)]\,\rd{\sigma}\,\rd{v_*}, \quad f|_{t=t_0} = g,
\end{equation}
we adopt the exponential Runge-Kutta method introduced in \cite{DP11} to find an approximate solution. Since $\mQ$ conserves mass, momentum and energy, $\mM[f]=\mM[g]$ does not change with time. Thus we can rewrite (\ref{homo_ode_B}) as
\begin{equation}\label{homo_ode_B1}
\partial_t ((f-\mM)e^{\mu t}) = (P(f) - \mu \mM) e^{\mu t},
\end{equation}
where $P(f) := \mQ(f) + \mu f$, $\mu>0$ being a constant, large enough so that $P(f)\geq 0$ (a simple choice is $\mu=\sup_v\int_{\mathbb{R}^d}\int_{S^{d-1}}B(v-v_*, \sigma) f(v_*)\,\rd{\sigma}\,\rd{v_*}$). Then, by applying the midpoint method to (\ref{homo_ode_B1}), one obtains a second-order scheme
\begin{equation}
\begin{split}
& (f^{(1)}-\mM) e^{\frac{\lambda}{2}} = (g-\mM) + \frac{\lambda}{2}\left(\frac{P(g)}{\mu}-\mM\right), \\
& (f^1-\mM) e^{\lambda} = (g-\mM) + \lambda e^{\frac{\lambda}{2}}\left(\frac{P(f^{(1)})}{\mu}-\mM\right),
\end{split}
\end{equation}
with $\lambda=\mu s$, which simplifies to
\begin{equation}\label{Boltz_sch}
\begin{split}
& f^{(1)} = e^{-\frac{\lambda}{2}}g + \left(1-e^{-\frac{\lambda}{2}} - \frac{\lambda}{2}e^{-\frac{\lambda}{2}}\right)\mM + \frac{\lambda}{2}e^{-\frac{\lambda}{2}} \frac{P(g)}{\mu}, \\
&  f^1 = e^{-\lambda}g + \left(1-e^{-\lambda} - \lambda e^{-\frac{\lambda}{2}}\right)\mM + \lambda e^{-\frac{\lambda}{2}} \frac{P(f^{(1)})}{\mu}.
\end{split}
\end{equation}
Therefore, we choose $f^1$ to be the approximate solution at $t=t_0+s$:
\begin{equation}
\exp(s\mQ)g \approx  \widetilde{\exp}(s \mQ) g:=f^1.
\end{equation}
This approximation is positivity-preserving since both $f^{(1)}$ and $f^1$ are convex combinations of positive functions. It is AP since $s\rightarrow\infty$ implies $\lambda\rightarrow\infty$, thus $f^1\rightarrow \mM$.

\begin{remark}
\label{rmk:Boltz}
Here we did not address the issue of velocity domain discretization. To get a fully discrete scheme, one also needs an efficient and positivity-preserving solver for the Boltzmann collision operator (to evaluate the term $P(f)$ in the scheme). Available choices are the direct simulation Monte Carlo (DSMC) method \cite{Bird}, the discrete velocity method \cite{MPR13}, or the recently proposed entropic Fourier method \cite{CFY18}.
\end{remark}

\subsection{The kinetic Fokker-Planck equation}

For the homogeneous kinetic Fokker-Planck equation
\begin{equation}\label{homo_ode_FP}
\partial_t f = \mQ(f) =  \nabla_v\cdot \left(\mM[f]\nabla_v \frac{f}{\mM[f]}\right),\quad f|_{t=t_0} = g,
\end{equation}
since $\mQ$ conserves mass, momentum and energy, $\mM[f]=\mM[g]$ does not change with time. We adopt the approximation proposed in \cite{JY11} to discretize $\mQ$. Define $\tilde{f}=\frac{f}{\sqrt{\mM}}$, then $\tilde{f}$ solves
\begin{equation}\label{homo_ode_FP1}
\partial_t \tilde{f} = \tilde{\mQ}(\tilde{f}) := \frac{1}{\sqrt{\mM}}\nabla_v\cdot (\mM\nabla_v \frac{\tilde{f}}{\sqrt{\mM}}),\quad \tilde{f}|_{t=t_0}=\tilde{g}:=\frac{g}{\sqrt{\mM}}.
\end{equation}
Hence
\begin{equation}
\exp(s\mQ)g = \sqrt{\mM}\exp(s\tilde{\mQ})\tilde{g}.
\end{equation}
Now it suffices to approximate $\exp(s\tilde{\mQ})\tilde{g}$. To do this, we will first discretize the velocity and then use the matrix exponential to solve (\ref{homo_ode_FP1}). For simplicity we consider $v$ in 1d. We truncate the velocity domain into a large enough interval $[-|v|_{\text{max}},|v|_{\text{max}}]$ and discretize it into $N_v$ grid points with $v_i = -|v|_{\text{max}}+(i-1/2)\Delta v,\,i=1,\dots,N_v$, $\Delta v = 2|v|_{\text{max}}/N_v$. Then the operator $\tilde{\mQ}$ can be approximated by a tridiagonal symmetric matrix $\tilde{\mQ}^h$ with the entries given by
\begin{equation}\label{tildeQ}
\begin{split}
& \tilde{\mQ}^h_{i,i} =  - \frac{1}{\Delta v^2}\frac{\sqrt{\mM_{i-1}}+\sqrt{\mM_{i+1}}}{\sqrt{\mM_i}},\\
& \tilde{\mQ}^h_{i,i-1} =  \tilde{\mQ}^h_{i,i+1} =  \frac{1}{\Delta v^2}, \\
\end{split}
\end{equation}
where $\mM_i=\mM(v_i)$. Define the vector $\tilde{g}^h=(\tilde{g}_1,\dots,\tilde{g}_{N_v})^T$ with $\tilde{g}_i=\tilde{g}(v_i)$, then we approximate $\exp(s\mQ)g$ as
\begin{equation} \label{matrixexp}
\left(\exp(s\mQ)g\right)_i \approx \sqrt{\mM_i}\left(\exp(s\tilde{\mQ}^h) \tilde{g}^h\right)_i,
\end{equation}
where $\exp(s\tilde{\mQ}^h)\tilde{g}^h$ can be computed very accurately by existing matrix exponential algorithms (for simplicity we assume there is no error occurring at this step). This approximation is positivity-preserving since the off-diagonal entries of $\tilde{\mQ}^h$ are non-negative. It is AP since $s\rightarrow\infty$ implies $\sqrt{\mM_i}\left (\exp(s\tilde{\mQ}^h) \tilde{g}^h\right)_i \rightarrow\mM_i$. To see this, note that the discretization (\ref{tildeQ}) for equation (\ref{homo_ode_FP1}) is equivalent to the following
\begin{equation}
\partial_t f_i=\frac{F_{i+1/2}-F_{i-1/2}}{\Delta v}, \quad F_{i+1/2}:=\frac{\sqrt{\mM_i\mM_{i+1}}}{\Delta v}\left(\frac{f_{i+1}}{\mM_{i+1}}-\frac{f_i}{\mM_i}\right).
\end{equation}
Define the discrete relative entropy as
\begin{equation}
H=\sum_i f_i\log \frac{f_i}{\mM_i}\Delta v,
\end{equation}
then
\begin{equation}
\begin{split}
\partial_t H&=\sum_i\partial_t f_i \left( \log\frac{f_i}{\mM_i}+1\right)\Delta v=\sum_i (F_{i+1/2}-F_{i-1/2})  \left( \log\frac{f_i}{\mM_i}+1\right)\\
&=-\sum_iF_{i+1/2}\left(\log\frac{f_{i+1}}{\mM_{i+1}}-\log\frac{f_i}{\mM_i}\right)\\
&=-\sum_i\frac{\sqrt{\mM_i\mM_{i+1}}}{\Delta v}\left(\frac{f_{i+1}}{\mM_{i+1}}-\frac{f_i}{\mM_i}\right)\left(\log\frac{f_{i+1}}{\mM_{i+1}}-\log\frac{f_i}{\mM_i}\right)\leq 0,
\end{split}
\end{equation}
and the equality holds if and only if $f_i/\mM_i$ is independent of $i$. This implies $f_i=\mM_i$ by conservation.

We remark that for the VPFP system, one can solve the homogeneous equation (\ref{homo_ode}) by the same method, since for the homogeneous equation the $\psi$ appeared in (\ref{FP1}) does not change in time.


\section{Entropy-decay property}
\label{sec:entropy}

In this section, we discuss the entropy-decay property of our scheme. First of all, we recall the following well-known result in kinetic theory. For the kinetic equation (\ref{KE}) with the collision operator being the BGK operator (\ref{BGK}), the ES-BGK operator (\ref{ESBGK}), the Boltzmann collision operator (\ref{Boltz}), or the kinetic Fokker-Planck operator (\ref{FP}), one has
\begin{equation} \label{entropy}
\frac{\rd{}}{\rd{t}} \iint f\log f\,\rd{v}\,\rd{x}\leq 0
\end{equation}
under a periodic or compactly supported boundary condition in $x$. This is the famous H-theorem which says that the total entropy of the system is always non-increasing. 

We would like to show that our scheme (\ref{scheme}) coupled with the first-order upwind discretization for the transport term and the homogeneous solvers discussed in Section~\ref{sec:homo} satisfies a discrete entropy-decay property (a discrete analog of (\ref{entropy})). In order to do so, we assume the velocity space is continuous, in particular, this means the Fokker-Planck operator is not discretized and the solution to its homogeneous equation can be found analytically.

For simplicity, we consider the equation (\ref{KE}) in 1d:
\begin{equation} \label{KE1d}
\partial_t f+ v\partial_x f=\frac{1}{\varepsilon}\mQ(f).
\end{equation}
We truncate the velocity domain to a large enough interval $[-|v|_{\text{max}}, |v|_{\text{max}}]$ and discretize the transport term by the upwind method ($j$ is the spatial index):
\begin{equation} \label{1Dupwind}
(v\partial_x f)_j = \chi_{v\ge 0}v\frac{f_j - f_{j-1}}{\Delta x} + \chi_{v<0}v\frac{f_{j+1} - f_j}{\Delta x}.
\end{equation}
Define the discrete entropy as
\begin{equation} 
\mS[f]:=\Delta x \sum_j S[f_j], \quad S[f_j]:=\int f_j \log f_j\,\rd{v},
\end{equation}
then we claim that the scheme (\ref{scheme}) satisfies a discrete entropy-decay property:
\begin{equation} \label{disentropy}
\mS[f^{n+1}]\leq \mS[f^n].
\end{equation}

To prove (\ref{disentropy}), we need two building blocks, one is the exponential step decays entropy, i.e., for either the exact $\exp(s\mQ)$ or approximate $ \widetilde{\exp}(s \mQ) $, one has
\begin{equation} \label{expentropy}
\mS[\exp(s\mQ)g]\leq \mS[g] \  \ \text{or} \  \ \mS[ \widetilde{\exp}(s \mQ) g]\leq \mS[g], \quad \forall \ \text{constant} \  s\ge 0,
\end{equation}
the other is the transport step decays entropy, i.e., for step of the form $g=f+a \Delta t \mT(f)$, one has
\begin{equation} \label{tranentropy}
\mS[g]\leq \mS[f], \quad \text{under the CFL condition} \ \Delta t\leq \frac{\Delta x}{a|v|_{\text{max}}}.
\end{equation}

We now prove (\ref{expentropy}) and (\ref{tranentropy}), respectively.

\begin{itemize}
\item For the BGK and Fokker-Planck operators, we have the exact $\exp(s\mQ)$, hence $\mS[\exp(s\mQ)g]\leq \mS[g]$ follows directly from the analytical result (\ref{ineq1}). 

For the ES-BGK operator, note that (\ref{homo_ode_ESBGKs2}) is a convex combination of $g$, $\mG[g]$ and $\mG[\exp(s\mQ)g]$. One has $\mS[\mG[g]]\le S[g]$ from \cite{ALPP00}, hence $\mS[\mG[\exp(s\mQ)g]]\le \mS[\exp(s\mQ)g] \le \mS[g]$ (the second inequality comes from the analytical result (\ref{ineq1})). Therefore, $\mS[ \widetilde{\exp}(s \mQ) g]\leq \mS[g]$ follows from  the convexity of $\mS$. 

For the Boltzmann operator, note that in the approximation (\ref{Boltz_sch}), $f^{(1)}$ is a convex combination of $g$, $\mM$ and $\frac{P(g)}{\mu}$, and $f^1$ is a convex combination of $g$, $\mM$ and $\frac{P(f^{(1)})}{\mu}$. In \cite{Villani98}, it is proved that $\mS[\frac{P(f)}{\mu}] \le \,\mS[f]$ (for Maxwell molecules). Therefore, by the convexity of $\mS$ and $\mS[\mM[g]]\leq \mS[g]$, one has $\mS[f^{(1)}]\le \mS[g]$, hence $\mS[f^1]\le \mS[g]$. Therefore, $\mS[ \widetilde{\exp}(s \mQ) g]\leq \mS[g]$.

\item The transport step $g=f+a \Delta t \mT(f)$ with (\ref{1Dupwind}) plugged in reads
\begin{equation}
\begin{split}
g_j&=f_j-a\Delta t \left (\chi_{v\ge 0}v\frac{f_j - f_{j-1}}{\Delta x} + \chi_{v<0}v\frac{f_{j+1} - f_j}{\Delta x}\right)\\
&=\left(1-a\frac{|v|\Delta t}{\Delta x}\right)f_j + a\frac{|v|\Delta t}{\Delta x} \left(\chi_{v\ge0}f_{j-1} + \chi_{v<0}f_{j+1}\right).
\end{split}
\end{equation}
Hence the right hand side is a convex combination of $f_j$ and $\chi_{v\ge0}f_{j-1} + \chi_{v<0}f_{j+1}$ under the CFL condition $\Delta t\leq \frac{\Delta x}{a|v|_{\text{max}}}$. Then using the convexity of function $f\log f$, one has
\begin{equation}
\begin{split} \label{disflux}
S[g_j]\leq &\int \left(1-a\frac{|v|\Delta t}{\Delta x}\right) f_j \log f_j\,\rd{v} \\
& +\int a\frac{|v|\Delta t}{\Delta x} \left(\chi_{v\ge0}f_{j-1}+ \chi_{v<0}f_{j+1}\right) \log \left(\chi_{v\ge0}f_{j-1} + \chi_{v<0}f_{j+1}\right)\,\rd{v}\\
=& S[f_j]-a\frac{\Delta t}{\Delta x} \left( F_{j+1/2}-F_{j-1/2} \right),
\end{split} 
\end{equation}
where
\begin{equation}
F_{j+1/2}:=\int |v|\left(\chi_{v\geq 0} f_j \log f_j - \chi_{v<0}f_{j+1}\log f_{j+1}\right)\,\rd{v}
\end{equation}
is the discrete entropy flux. Summing over $j$ in (\ref{disflux}) and assuming the periodic or compactly supported boundary condition in $x$, one obtains
\begin{equation}
\mS[g]\leq \mS[f].
\end{equation}
\end{itemize}

Now applying the previous two results in (\ref{scheme}), we have
\begin{equation}
\mS[f^{(2)}]\leq \mS[f^{(1)}]\leq \mS[f^{(0)}]\leq \mS[f^n],
\end{equation}
hence
\begin{equation}
\mS[f^{n+1}]\leq w\mS[f^{(2)}]+(1-w)S[f^n]\leq \mS[f^n].
\end{equation}
The assertion is proved.


\section{A remark on spatial and velocity discretizations}
\label{sec:spatial}

Most of the spatial and velocity discretizations follow our previous paper \cite{HSZ18}, namely, we use a finite volume method for the $x$-variable and finite difference method for the $v$-variable.

For the transport term, we adopt the fifth-order finite volume WENO method \cite{Shu98} with a bound-preserving limiter \cite{ZS10, ZS11} to insure the positivity. Since the treatment of this part is standard and has been described in \cite{HSZ18}, we omit the detail.

For the collision term, special care needs to be paid when switching between the finite volume and finite difference framework. We briefly describe the procedure in the following. For convenience, we regard $v$ as continuous and omit it in the discussion.

Let $I_j=[x_{j-1/2},x_{j+1/2}]$ be the $j$-th spatial cell, and $\{x_{j,l}\} \ (l=1,2,3)$ denote the three Gauss-Legendre quadrature points in this cell and $\{w_l\}$ be the corresponding quadrature weights. For a fixed $v$, suppose we are given the cell average $f_j\geq 0$ in $I_j$, we would like to construct a polynomial $f_j(x)$ of degree four such that
\begin{itemize}
\item $f_j(x)$ is a fifth-order accurate approximation to $f(x)$ in $I_j$ with $f_j$ being its cell average, i.e., 
\begin{equation} \label{poly}
\frac{1}{\Delta x}\int_{x_{j-1/2}}^{x_{j+1/2}} f_j(x)\rd{x} = f_j.
\end{equation}
\item $f_j(x)$ is non-negative at the Gauss quadrature points, i.e.,
\begin{equation}
f_{j,l}:=f_j(x_{j,l})\geq 0, \quad l=1,2,3.
\end{equation}
\end{itemize}
The construction of such a polynomial can be done similarly as described in Section 3.2.2 of our previous paper \cite{HSZ18}. Provided with $f_j(x)$, it is easy to see (\ref{poly}) reduces to
\begin{equation}\label{ave_exact}
\sum_{l=1}^3 w_l f_{j,l} = f_j,
\end{equation}
since the three-point Gauss-Legendre quadrature is exact for polynomials with degree no more than five.

Then we approximate the $j$-th cell average of $\exp(s\mQ)f$ by
\begin{equation}\label{point_exp}
(\exp(s\mQ)f)_j = \sum_{l=1}^3 w_l \exp(s\mQ)f_{j,l}.
\end{equation}
This approximation is fifth-order accurate in $x$ since the reconstruction $f_{j,l}$ is. It is also conservative, since
\begin{equation}
\langle (\exp(s\mQ)f)_j\phi\rangle = \sum_{l=1}^3 w_l \langle\exp(s\mQ)f_{j,l}\phi\rangle= \sum_{l=1}^3 w_l \langle f_{j,l}\phi\rangle = \langle\sum_{l=1}^3 w_l  f_{j,l}\phi\rangle = \langle f_j \phi\rangle,
\end{equation}
where we used (\ref{cons}) in the second equality and (\ref{ave_exact}) in the last one. 

For the mixed regime problem where $\varepsilon=\varepsilon(x)$, one needs to compute $\exp(s(x)\mQ f)$ with $s(x)$ a given function depending on $x$. To do this, we use the same reconstruction $f_{j,l}$ and approximate $\exp(s(x)\mQ f)$ by
\begin{equation}\label{point_exp}
(\exp(s(x)\mQ)f)_j = \sum_{l=1}^3 w_l \exp(s(x_{j,l})\mQ)f_{j,l},
\end{equation}
which is still fifth-order accurate and conservative.


\section{Numerical examples}
\label{sec:num}

In this section we demonstrate numerically the properties of the proposed scheme (\ref{scheme}) with coefficients (\ref{coef}) (\ref{coef3}). We use the 1d BGK and Fokker-Planck equations as prototype examples since the main purpose of this work is to develop a generic time integrator that can be potentially applied to a large class of equations rather than to study a particular kinetic equation.

We consider the computational domain $x\in [0,2]$ with periodic boundary condition (except the test in Section~\ref{subsec:pos}, where the Dirichlet boundary condition is assumed), and a large enough velocity domain $v\in[-|v|_{\text{max}},|v|_{\text{max}}]$ with $|v|_{\text{max}}=15$. The $x$-space is discretized into $N_x$ cells with $\Delta x = 2/{N_x}$ and cell center $x_j=(j-1/2)\Delta x$, $j=1,\dots,N_x$. The $v$-space is discretized into $N_v$ grid points with $\Delta v = 2|v|_{\text{max}}/{N_v}$ and $v_i=-|v|_{\text{max}}+(i-1/2)\Delta v$, $i=1,\dots,N_v$. Unless specified, $N_v=150$ is used in all tests so that the discretization error in $v$ is much smaller than that in $x$ and $t$. 

To compute the matrix exponential (\ref{matrixexp}) resulting from the discretization of the Fokker Planck operator, we used the code by S. Guttel \cite{Guttel10} for the test in Section~\ref{subsec:acc}, and the MATLAB function `expm' for other tests.

\subsection{Accuracy test}
\label{subsec:acc}

We first verify the second-order accuracy of the scheme. We consider inconsistent initial data
\begin{equation}\label{incon1_2}
f(0,x,v) = 0.5M_{\rho,u,T} + 0.3M_{\rho,-0.5u,T},
\end{equation}
with
\begin{equation}\label{incon1_1}
\rho = 1+0.2\sin(\pi x),\quad u = 1,\quad T = \frac{1}{1+0.2\sin(\pi x)},
\end{equation}
and compute the solution to time $t=0.1$. We choose different values of $\varepsilon$, ranging from the kinetic regime ($\varepsilon=1$) to the fluid regime ($\varepsilon=10^{-10}$). We choose different $\Delta x$ and set $\Delta t = 0.5\Delta x/{|v|_{\text{max}}}$. This CFL number is not small enough to guarantee the positivity which is pretty restrictive due to the spatial discretization. We will consider the positivity-preserving property in the following test. For the same reason, the positivity-preserving limiter is turned off here. Since the exact solution is not available, the numerical solution on a finer mesh $\Delta x/2$ is used as a reference solution to compute the error for the solution on the mesh of size $\Delta x$:
\begin{equation}
\text{error}_{\Delta t,\Delta x}:=\|f_{\Delta t,\Delta x}-f_{\Delta t/2,\Delta x/2}\|_{L^2_{x,v}}.
\end{equation}
The results are shown in Tables 7.1 and 7.2. For the Fokker-Planck equation, due to the second-order discretization error in the velocity space, one has to choose a larger $N_v$ in order to see the temporal error. In all these results, the spatial error dominates for small $N_x$, and the temporal error dominates for large $N_x$. One can clearly see that in both the kinetic regime $\varepsilon=O(1)$ and the fluid regime $\varepsilon\ll 1$, the scheme is second order. Note that there is some extent of order reduction in the intermediate regime $\varepsilon=O(\Delta t)$. The uniform accuracy of the AP scheme is an open problem and we do not attempt to address this issue in the current work.

\begin{table}
\begin{center}
\footnotesize
\begin{tabular}{|l|c|c|c|c|c|c|}
\hline
&$\varepsilon=1e+00$&$\varepsilon=1e-02$&$\varepsilon=1e-04$&$\varepsilon=1e-06$&$\varepsilon=1e-08$&$\varepsilon=1e-10$\\
\hline
Nx=10&5.60e-04&4.64e-04&4.67e-04&4.67e-04&4.67e-04&4.67e-04\\
\hline
Nx=20&5.91e-05&3.93e-05&3.65e-05&3.65e-05&3.65e-05&3.65e-05\\
Order&3.25&3.56&3.68&3.68&3.68&3.68\\
\hline
Nx=40&4.33e-06&2.83e-06&4.46e-06&2.46e-06&2.46e-06&2.46e-06\\
Order&3.77&3.80&3.03&3.89&3.89&3.89\\
\hline
Nx=80&2.11e-07&2.86e-07&5.24e-06&1.10e-07&1.10e-07&1.10e-07\\
Order&4.36&3.31&-0.23&4.49&4.49&4.49\\
\hline
Nx=160&1.27e-08&6.24e-08&3.25e-06&6.29e-09&6.29e-09&6.29e-09\\
Order&4.05&2.19&0.69&4.12&4.12&4.12\\
\hline
Nx=320&2.89e-09&1.55e-08&1.23e-06&1.45e-09&1.45e-09&1.45e-09\\
Order&2.14&2.01&1.40&2.11&2.11&2.11\\
\hline
Nx=640&7.30e-10&3.88e-09&3.74e-07&3.68e-10&3.68e-10&3.68e-10\\
Order&1.99&2.00&1.72&1.98&1.98&1.98\\
\hline
Nx=1280&1.83e-10&9.71e-10&1.03e-07&2.82e-10&9.20e-11&9.20e-11\\
Order&2.00&2.00&1.86&0.38&2.00&2.00\\
\hline
\end{tabular}
\caption{Accuracy test of the scheme for the BGK equation.}
\end{center}
\end{table}

\begin{table}
\begin{center}
\footnotesize
\begin{tabular}{|l|c|c|c|c|c|c|c|c|}
\hline
&$\varepsilon=1e+00$&$\varepsilon=1e-01$&$\varepsilon=1e-02$&$\varepsilon=1e-03$&$\varepsilon=1e-04$&$\varepsilon=1e-05$&$\varepsilon=1e-06$&$\varepsilon=1e-07$\\
\hline
Nx=10&5.30e-04&4.64e-04&4.62e-04&4.66e-04&4.66e-04&4.66e-04&4.66e-04&4.66e-04\\
\hline
Nx=20&5.50e-05&4.32e-05&3.93e-05&4.63e-05&3.65e-05&3.65e-05&3.65e-05&3.65e-05\\
Order&3.27&3.42&3.56&3.33&3.68&3.68&3.68&3.68\\
\hline
Nx=40&3.89e-06&2.82e-06&3.42e-06&1.29e-05&2.54e-06&2.46e-06&2.46e-06&2.46e-06\\
Order&3.82&3.94&3.52&1.85&3.85&3.89&3.89&3.89\\
\hline
Nx=80&1.80e-07&1.29e-07&5.47e-07&4.23e-06&2.16e-06&1.10e-07&1.10e-07&1.10e-07\\
Order&4.43&4.45&2.64&1.61&0.23&4.49&4.49&4.49\\
\hline
Nx=160&1.13e-08&9.34e-09&1.35e-07&1.25e-06&2.97e-06&1.16e-08&6.30e-09&6.29e-09\\
Order&3.99&3.79&2.02&1.76&-0.46&3.25&4.12&4.12\\
\hline
Nx=320&2.64e-09&2.07e-09&3.56e-08&3.53e-07&1.80e-06&1.08e-07&1.50e-09&1.45e-09\\
Order&2.10&2.17&1.92&1.82&0.72&-3.22&2.07&2.11\\
\hline
Nx=640&6.66e-10&5.77e-10&9.93e-09&1.01e-07&7.15e-07&3.91e-07&3.62e-10&3.68e-10\\
Order&1.98&1.85&1.84&1.80&1.33&-1.86&2.05&1.98\\
\hline
\end{tabular}
\caption{Accuracy test of the scheme for the Fokker-Planck equation. Here $N_v=600$.}
\end{center}
\end{table}

\subsection{Positivity-preserving property}
\label{subsec:pos}

We now illustrate the positivity-preserving property of the scheme. Consider the initial data
\begin{equation}
f(0,x,v) = M_{\rho,u,T},
\end{equation}
with
\begin{equation}
(\rho,u,T)=\left\{\begin{split}
& (1,0,1),\quad  0\leq x\leq 1,\\
& (0.125,0,0.25),\quad 1<x \leq 2.
\end{split}\right.
\end{equation}

With the positivity-preserving limiter, the CFL condition of our scheme is $\Delta t\leq \frac{1}{12}\frac{\Delta x}{ |v|_{\text{max}}}$ (note that $1/12$ comes from the spatial discretization and the forward Euler method also has the same constraint). We choose $\Delta t = \frac{1}{24}\frac{\Delta x}{ |v|_{\text{max}}}$ and $N_x=80$. 

For the BGK equation, no negative cells are detected in the simulation. For the Fokker-Planck equation, one technical issue is that we are not aware of any algorithms that can guarantee the numerically computed matrix exponential is positive if the exact matrix exponential is. To demonstrate that no negative values are caused by our time discretization, we use `expm' function in MATLAB to compute the matrix exponential and set the negative entries of the resulting matrix to zero. With this modification, no negative cells are detected in the simulation.

As a comparison, we solve the same equations with the same initial data and spatial/velocity discretization, but using the ARS(2,2,2) scheme in time \cite{ARS97}, which is a standard second-order accurate IMEX scheme with no positivity-preserving property. The number of negative cells during the simulation is tracked. The result for the BGK equation is already included in the previous paper \cite{HSZ18} and is omitted here. The result for the Fokker-Planck equation is shown in Figure~\ref{fig:negcell}. Here to make the comparison fair, when we compute $(I-s\tilde{\mQ}^h)^{-1}g^h$ (an operator needs to be evaluated in the IMEX scheme), we first compute the matrix $(I-s\tilde{\mQ}^h)^{-1}$ which is not necessarily positive at the numerical level, and then set the negative entries to zero in this matrix. This is to make sure that no negative values are generated due to the failure of positivity-preserving in the matrix inversion. In Figure 1 one can still see a lot of negative cells in the fluid regime. 

\begin{figure}
\begin{center}
	\includegraphics[width=6in,height=3.5in]{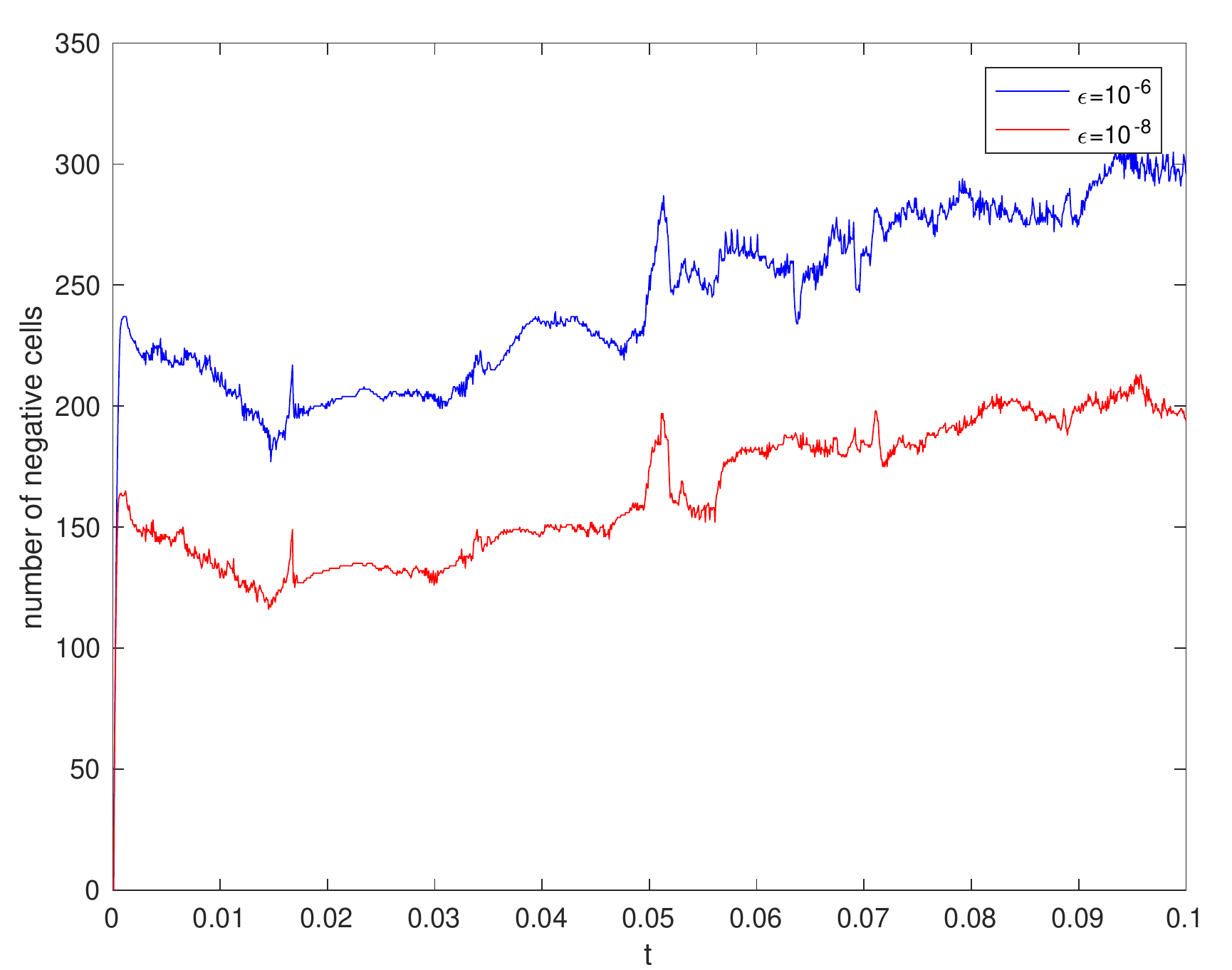}
	\caption{Total number of negative cells for the ARS(2,2,2) scheme applied to the Fokker-Planck equation during time evolution. Blue line: $\varepsilon=10^{-6}$. Red line: $\varepsilon=10^{-8}$.}
	\label{fig:negcell}
\end{center}	
\end{figure}

\subsection{AP property}

Finally, to illustrate the AP property, we use the proposed scheme to solve the BGK and Fokker-Planck equations in a mixed regime ($\varepsilon$ is a function of $x$ so that in part of the domain the problem is in kinetic regime and while in other part it is in fluid regime). We take the same initial data as in (\ref{incon1_2})-(\ref{incon1_1}) and $N_x=40$.

For the BGK equation, we consider $\varepsilon=\varepsilon(x)$ as follows:
\begin{equation}
\varepsilon(x) = \varepsilon_0 + (\tanh(1-11(x-1))+\tanh(1+11(x-1))), \quad \varepsilon_0 = 10^{-5}.
\end{equation}
We compare the macroscopic quantities at time $t=0.5$ with a reference solution computed by SSP-RK2 with $N_x=80$. Note that for our scheme, $\Delta t = \frac{1}{24}\frac{\Delta x}{|v|_{\text{max}}}\approx 7\times 10^{-5}$; while for the explicit SSP-RK2 scheme, $\Delta t = \frac{1}{240}\frac{\Delta x}{|v|_{\text{max}}}\approx 7\times 10^{-6} $ which needs to resolve $\varepsilon$. One can see a good agreement with the reference solution in Figure \ref{fig:APBGK}.

\begin{figure}
\begin{center}
	\includegraphics[width=5.6in,height=2.5in]{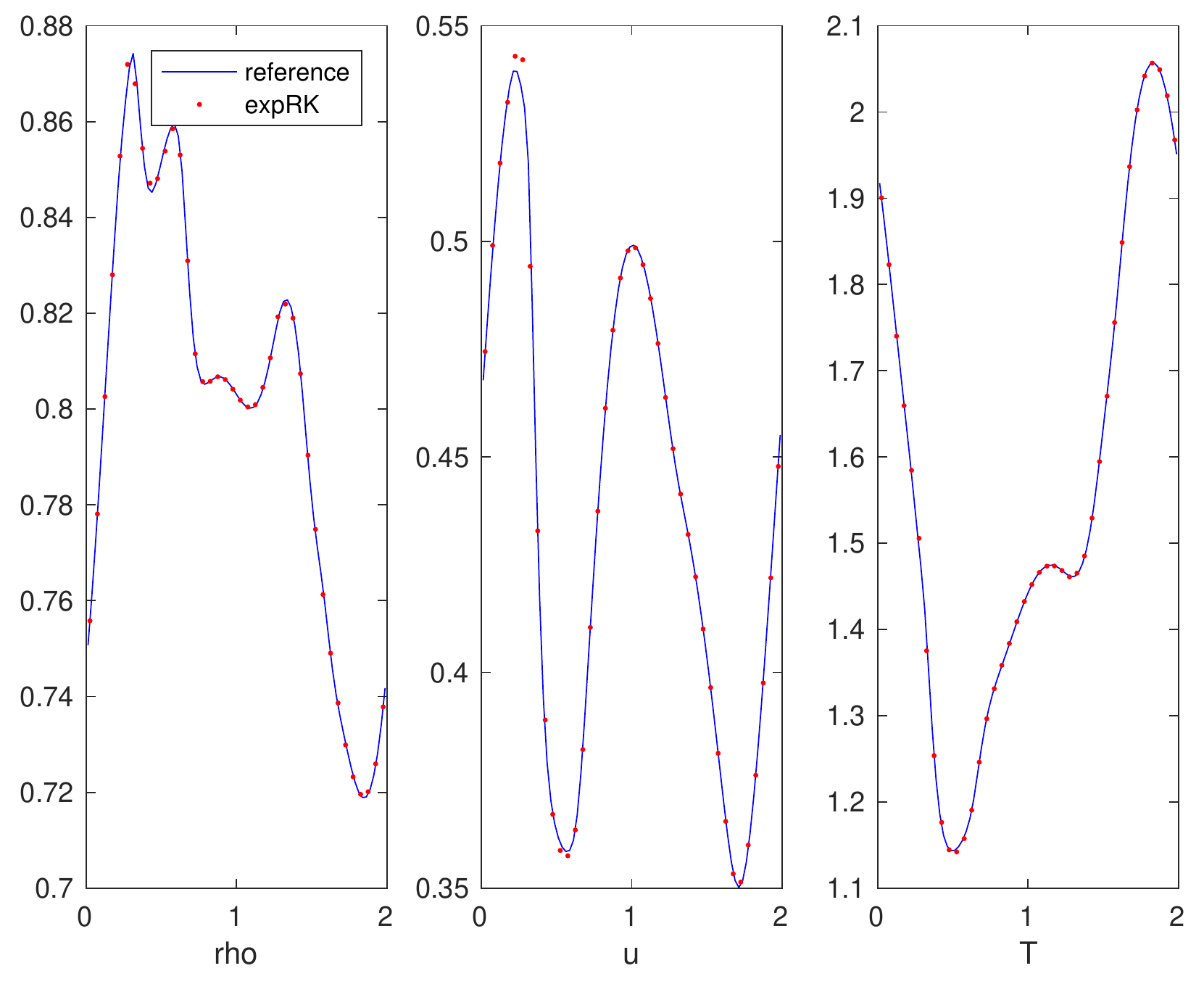}
	\caption{The BGK equation in a mixed regime. Left to right: density $\rho$, velocity $u$, and temperature $T$. Solid line: reference solution computed by the explicit SSP-RK2 scheme. Dots: solution computed by the proposed scheme.}
	\label{fig:APBGK}
\end{center}	
\end{figure}

For the Fokker-Planck equation, we consider the following $\varepsilon(x)$:
\begin{equation}
\varepsilon(x) = \varepsilon_0 + (\tanh(1-11(x-1))+\tanh(1+11(x-1))), \quad \varepsilon_0 = 5\times 10^{-4}.
\end{equation}
The numerical parameters are chosen the same as the BGK case, except in the reference solution, $\Delta t = \frac{1}{540}\frac{\Delta x}{|v|_{\text{max}}}\approx 3\times 10^{-6}$ in order to satisfy the explicit parabolic CFL condition. The result is shown in Figure 3, and again with good agreement.
\begin{figure}
\begin{center}
	\includegraphics[width=5.6in,height=2.5in]{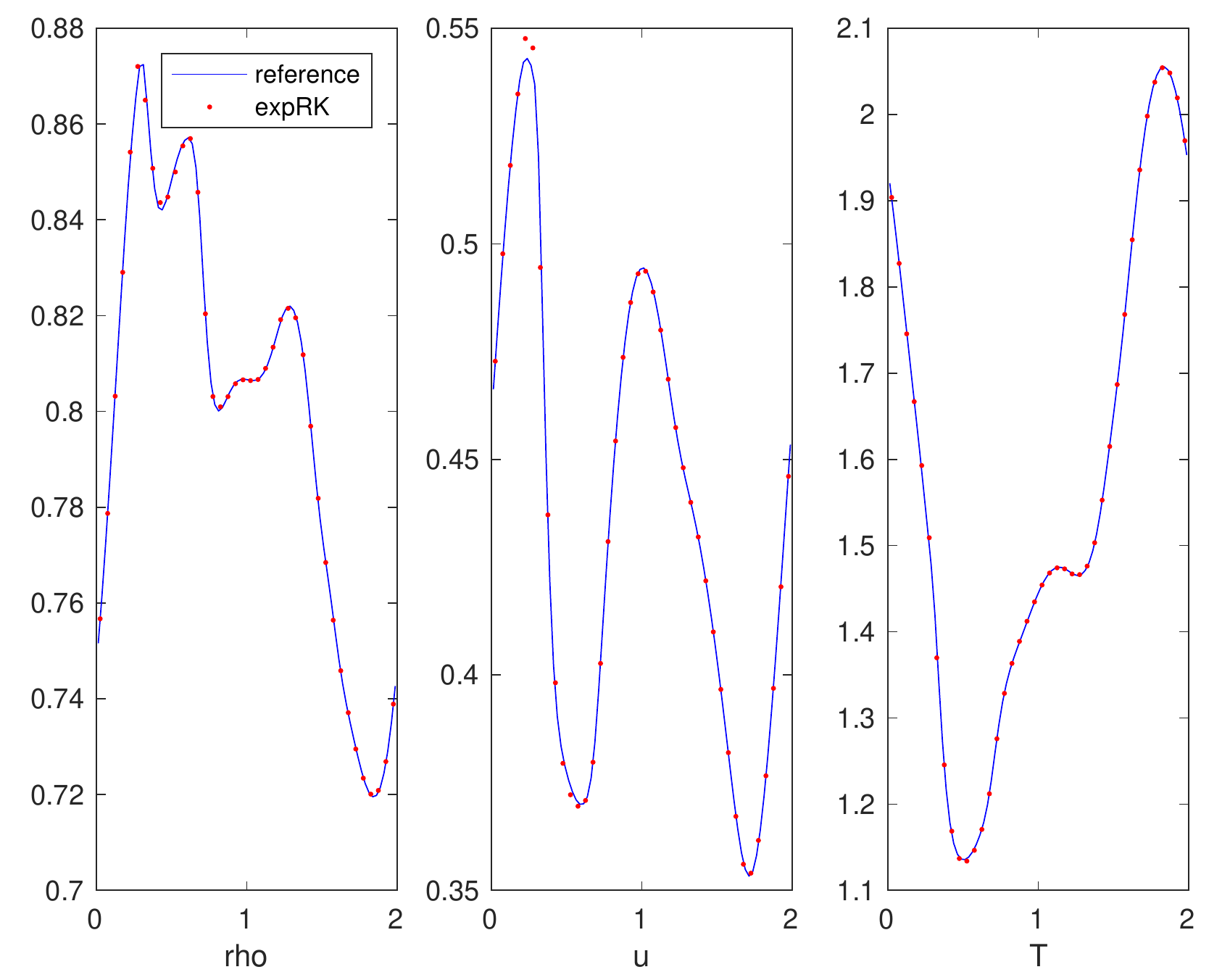}
	\caption{The Fokker-Planck equation in a mixed regime. Left to right: density $\rho$, velocity $u$, and temperature $T$. Solid line: reference solution computed by the explicit SSP-RK2 scheme. Dots: solution computed by the proposed scheme.}
\end{center}	
\end{figure}


\section{Conclusion}
\label{sec:con}

We introduced a new exponential Runge-Kutta time discretization method for a class of stiff kinetic equations. The method is second order, AP, and positivity-preserving. We applied the method to the relaxation type equations (BGK and ES-BGK equations), the diffusion type equations (kinetic Fokker-Planck and Vlasov-Poisson-Fokker-Planck equations), and even the full Boltzmann equation. Further, we showed that the method satisfies an entropy-decay property when coupled with upwind discretization for the transport term. Numerical examples for the BGK and Fokker-Planck equations were presented to demonstrate the properties of the proposed method.

\section*{Acknowledgement}

The first author would like to thank Prof.~Lili Ju for helpful discussion on exponential methods.

\bibliographystyle{plain}
\bibliography{hu_bibtex}
\end{document}